\tikzset{node distance=1.4cm, auto}
\theoremstyle{plain}% default
\newtheorem{thm}{Théorème}[section]
\newtheorem{lem}[thm]{Lemme}
\newtheorem{prop}[thm]{Proposition}
\theoremstyle{definition}
\theoremstyle{remark}
\newcommand{\nep}[1]{#1_{N,\epsilon}}
\newcommand{\nepp}[1]{#1_{N,\epsilon'}}
\newcommand{\ep}[1]{#1_{\epsilon}}
\newcommand{\epp}[1]{#1_{\epsilon'}}
\newcommand{\expi}[1]{\exp(2\pi i #1)}
\newcommand{\expin}[1]{\exp(-2\pi i #1)}
\newcommand{\puie}[1]{\epsilon^{#1}}
\newcommand{\axne}[1]{#1(\alpha,x,N,\epsilon)}
\newcommand{\axnep}[1]{#1(\alpha,x,N,\epsilon')}
\newcommand{\axnke}[1]{#1(\alpha,x,N,k,\epsilon)}
\newcommand{\sset}{S(N,\alpha,\epsilon)}
\newcommand{\shset}{\hat{S}(N,\alpha,\epsilon)}
\newcommand{\kal}{\left\{k\alpha\right\}}
\newcommand{\opid}{\mathbbm{1}}
\newcommand{\puiea}{\puie{1+a+2a^2}}
\newcommand{\puieb}{\puie{1+\frac{2}{a}+2a}}
\newcommand{\torm}[1]{(\mathbb{T}^{1}-#1)\times \mathbb{T}^{1}}
\newcommand{\uset}{U(N,\alpha,\epsilon)}
\newcommand{\kv}{\vec{k}}
\newcommand{\mset}{m\in\mathbb{Z}^2-{0},\lVert m\rVert\leq M(\epsilon)}
\newcommand{\axnme}[1]{#1(\alpha,x,N,m,\epsilon)}
\newcommand{\limn}{\lim_{N\rightarrow\infty}}
\newcommand{\limin}{\liminf_{N\rightarrow\infty}}
\newcommand{\limsn}{\limsup_{N\rightarrow\infty}}
\newcommand{\sfn}{\frac{S(\alpha,x,N)}{N^a}}
\title{A Limit Theorem for Torus Translation}
\author{Linyuan Liu}
\address{Sydney Mathematical Research Institute\\
University of Sydney\\
NSW 2006\\
Australia}
\email{Linyuan.Liu@normalesup.org}
\begin{document}
\maketitle

Cet article est inspiré par la preuve du Théorème 10 de
\cite{DF15}. L'auteur remercie Bassam Fayad de lui avoir proposé cette
question et de lui avoir beaucoup aidée dans la préparation de cet
article. 

\section{Notations} 
On identifie $\tilde{M}=SL(2,\mathbb{R})/SL(2,\mathbb{Z})$ avec l'espace de réseaux sur $\mathbb{R}^2$.
Notons $\tilde{\mu}$ la mesure invariante sur $\tilde{M}$.
Soit $L\in\tilde{M}$, 
on note $e_1(L)$ le vecteur le plus court dans $L$ et $e_2(L)$ le vecteur le plus court 
dont la projection sur le complementaire orthogonal du sous-espace engendré par $e_1(L)$ est non-nulle. 
Alors pour $\tilde{\mu}$ presque tout $L\in\tilde{M}$, $e_1$,$e_2$ sont bien-définies.

Pour $m\in \mathbb{Z}^2$ et $L\in\tilde{M}$, on note $(m,e)$ le vecteur $m_1e_1(L)+m_2e_2(L)$ et $(X_m,Z_m)=(X_m(L),Z_m(L))$
son coordonnée.

On note $\lambda$ à la fois les mesures de Haar sur $\mathbb{T}^{1}$ et sur $\mathbb{T}^{2}$.
Notons $M=\tilde{M}\times \mathbb{T}^{2}$ et $\mu=\tilde{\mu}\times\lambda$ sur $M$.

\section{Énoncé du théorème principal}
Soient $a\in ]0,1[$ et $\alpha\in \mathbb{T}^{1}$. On définit
\begin{equation}
 S(\alpha,x,N)=\sum_{n=0}^{N-1}\varphi(x+n\alpha),
\end{equation}
où $\varphi(x)=\frac{1}{x^a}-\frac{1}{1-a}$ est de moyenne nulle.

Notre but est de démontrer le théorème suivant:
\begin{thm}\label{thm:principal}
  Il existe une fonction mesurable \( D \) sur $( M, \mu) $ qui est finie $\mu$-presque partout telle que pour Lebesgue
  presque tout $z\in \mathbb{R} $, on a
  \begin{equation}
    \lim_{ N\rightarrow \infty }\lambda \left\{ \left( \alpha,x
      \right)\in \mathbb{T}^{2}|\quad\frac{S(\alpha,x,N)}{N^a}\leq z
    \right\}=\mu\left\{ \left( L,\gamma \right)\in M\times \mathbb{T}^{2}|D\left(
    L,\gamma
    \right)\leq z
  \right\} .
    \label{eq:resultat}
  \end{equation}
\end{thm}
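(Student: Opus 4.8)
The plan is to realise the normalised Birkhoff sum $\sfn$ as a regularised sum over an affine unimodular lattice (a \emph{grid}), and to transport the problem to $M$ via the equidistribution of a suitable family of grids. Write $x_n=\{x+n\alpha\}\in(0,1)$ for the orbit points, so that
\begin{equation*}
S(\alpha,x,N)=\sum_{n=0}^{N-1}\left(x_n^{-a}-\frac{1}{1-a}\right).
\end{equation*}
Since $\varphi$ is mean-zero, the dominant fluctuations come from the times $n$ at which $x_n$ is close to the singularity at $0$: a point $x_n$ of size $\asymp 1/N$ contributes $\asymp N^{a}$, which fixes the normalisation. First I would dispose of the regular part, writing $\varphi=\psi+\rho$ where $\psi$ carries the singularity $x^{-a}$ near $0$ and $\rho$ is of bounded variation with mean zero; Denjoy--Koksma then gives $\sum_{n<N}\rho(x_n)=O(\log N)=o(N^{a})$ for Lebesgue-typical $\alpha$, so $\rho$ does not affect the limit law.

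It remains to treat the singular part after rescaling. Set $h_\alpha=\begin{pmatrix}1&0\\\alpha&1\end{pmatrix}$ and $g_N=\begin{pmatrix}1/N&0\\0&N\end{pmatrix}$, and associate to $(\alpha,x,N)$ the grid $\Gamma_N(\alpha,x)=g_N\big((0,x)+h_\alpha\mathbb{Z}^2\big)$, with underlying lattice $L_N=g_Nh_\alpha\mathbb{Z}^2\in\tilde{M}$ and fibre shift $\gamma_N\in\mathbb{T}^2$ induced by $(0,Nx)$. A grid point $g_N(n,\,x+n\alpha-m)$ has first coordinate $n/N$ and second coordinate $Z=Nx_n$, so the times $0\le n<N$ fill exactly the vertical strip $\{X\in[0,1)\}$ and $x_n\asymp1/N$ corresponds to $Z$ bounded. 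Substituting $x_n=Z/N$ yields the \emph{exact} identity, for the singular part,
\begin{equation*}
\sfn=\sum_{\substack{v\in\Gamma_N(\alpha,x)\\ X_v\in[0,1),\ 0<Z_v<N}}Z_v^{-a}-\frac{N^{1-a}}{1-a},
\end{equation*}
which is the regularised lattice sum truncated at $T=N$. Using the basis $e_1(L),e_2(L)$ and the coordinates $(X_m,Z_m)$ of the excerpt, I would then define the candidate limit function by
\begin{equation*}
D(L,\gamma)=\lim_{T\to\infty}\left(\ \sum_{\substack{v\\ X_v\in[0,1),\ 0<Z_v\le T}}Z_v^{-a}\ -\ \frac{T^{1-a}}{1-a}\right),
\end{equation*}
where $v$ runs over the points of the grid determined by $(L,\gamma)$. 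The subtracted term $T^{1-a}/(1-a)$ is precisely the expected value of $\sum_{0<Z_v\le T}Z_v^{-a}$, because the strip $[0,1)\times(0,T]$ has area $T$ in a covolume-one grid; this is the geometric incarnation of the mean-zero cancellation, and passing from $T=N$ to $T=\infty$ will cost only $o(1)$.

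Two convergence facts then drive the conclusion. First, $D$ is well defined and finite $\mu$-almost everywhere: writing $\mathcal N(T)=\#\{v:X_v\in[0,1),\,0<Z_v\le T\}$, the discrepancy $\mathcal N(T)-T$ grows sub-polynomially for $\tilde\mu$-a.e. lattice (a logarithm-law / ergodic-theorem statement for the geodesic flow on $\tilde{M}$), and Abel summation of $Z_v^{-a}$ against $d(\mathcal N(z)-z)$ then makes the integral converge, giving finiteness of $D$ as well as $\sfn=D(L_N,\gamma_N)+o(1)$. Second, I would prove that the push-forward of Lebesgue measure $\lambda$ on $(\alpha,x)\in\mathbb{T}^2$ under $(\alpha,x)\mapsto(L_N,\gamma_N)$ converges weakly to $\mu=\tilde\mu\times\lambda$ on $M$; this is the equidistribution of the expanding translates $g_N$ of the horocyclic family $h_\alpha$ together with the fibre shift $Nx$, which follows from Ratner-type equidistribution (equivalently, from mixing of the geodesic flow) in the space of affine lattices. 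Combining the coupling $\sfn=D(L_N,\gamma_N)+o(1)$ with this weak convergence gives convergence in distribution of $\sfn$ under $\lambda$ to $D$ under $\mu$; since a distribution function has at most countably many discontinuities, \eqref{eq:resultat} then holds at Lebesgue-a.e.\ $z$.

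The main obstacle I anticipate is not the equidistribution statement, which is citable, but the \emph{uniform} control of the deepest excursions near the singularity. Because $Z_v\mapsto Z_v^{-a}$ is unbounded, a single very close approach $x_n\ll1/N$ can dominate the sum, so neither $\sfn$ nor $D$ is a bounded continuous function of the grid and weak convergence does not apply directly. I must therefore establish tightness: truncate $Z_v^{-a}$ at a level $\delta$, apply weak convergence to the resulting bounded functions (continuous on a full-$\mu$-measure set of grids), and then let $\delta\to0$ after $N\to\infty$, controlling the discarded part uniformly in $N$. This last step requires quantitative estimates on how often the rescaled orbit enters a small neighbourhood of the singularity, i.e.\ on the distribution of $\min_{n<N}x_n$ and of the low-lying grid points, so that the mass placed at large values of $\sfn$ is uniformly small and matches the corresponding tail of $D$ under $\mu$.
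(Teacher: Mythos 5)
Your strategy is genuinely different from the paper's, which never writes $\sfn$ as a grid sum: the paper truncates $\varphi$ at height $\epsilon/N$, expands in Fourier series, discards the non-resonant frequencies by $L^2$ estimates (its Lemmas 3.2--3.4), excises a small set $E_{N,\epsilon}$ of bad $\alpha$'s, identifies the surviving resonant frequencies with the finitely many points of the lattice $g_{\ln N}\Lambda_{\alpha}\mathbb{Z}^2$ lying in a fixed window, and obtains $D$ not as an explicit formula but as a limit in measure of the finite sums $\ep{D}$. Your picture is dual to this (your grid points are the orbit points $(n/N,\,N\{x+n\alpha\})$, the paper's lattice points are the frequencies $(k/N,\,N\{k\alpha\})$; the underlying lattice $g_N h_\alpha\mathbb{Z}^2=g_{\ln N}\Lambda_\alpha\mathbb{Z}^2$ is the same). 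Your exact identity is correct (note that $\varphi$ \emph{is} $x^{-a}-\frac{1}{1-a}$, so your preliminary splitting $\varphi=\psi+\rho$ and the Denjoy--Koksma step for $\rho$ are vacuous), and the joint equidistribution of $(L_N,\gamma_N)$ is indeed citable; it is what the paper proves in its Section 4. But two steps do not hold up as written.

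First, the coupling $\sfn=D(L_N,\gamma_N)+o(1)$ is justified by a discrepancy bound valid ``for $\tilde\mu$-a.e.\ lattice''. That is a statement about a \emph{fixed} grid; it transfers no information to the $N$-dependent family $\Gamma_N(\alpha,x)$, whose members may avoid the full-measure set for every $N$. The gap is repairable (the tail $\lim_{T\to\infty}\bigl(\sum_{N\le Z_v\le T}Z_v^{-a}-\int_N^T z^{-a}dz\bigr)$ equals $N^{-a}\sum_{n<N}G(\{x+n\alpha\})$ for an explicit bounded, mean-zero, bounded-variation $G$, and Denjoy--Koksma then gives $o(1)$ for a.e.\ $\alpha$), but not by the argument you give.

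Second, and more seriously, your tightness discussion targets the wrong enemy. The deep excursions $Z_v<\delta$ are the easy part: they occur on a set of measure at most $\delta$ (union bound over $n$), which is exactly how the paper passes from $\sfn$ to $\triangle$. The real obstruction is the bulk range: to exploit weak convergence of the fixed-truncation functionals you must show that
\begin{equation*}
R_{N,T}=\sum_{\substack{v\in\Gamma_N(\alpha,x)\\ X_v\in[0,1),\ T\le Z_v<N}}Z_v^{-a}-\int_T^N z^{-a}\,dz
\end{equation*}
is small in probability \emph{uniformly in $N$} as $T\to\infty$. Chebyshev cannot do this: writing $f_{N,T}(y)=y^{-a}\opid_{[T/N,1)}(y)$ and using $\int_0^1\bigl|\sum_{n<N}e^{2\pi ink\alpha}\bigr|^2d\alpha=N$ for every $k\neq0$, one gets the exact identity
\begin{equation*}
\int_{\mathbb{T}^{2}}|R_{N,T}|^2\,dx\,d\alpha=N^{1-2a}\sum_{k\neq0}|\hat f_{N,T}(k)|^2\ \geq\ c_a\,N^{1-2a},
\end{equation*}
since $|\hat f_{N,T}(1)|\to\bigl|\int_0^1y^{-a}e^{-2\pi iy}dy\bigr|>0$. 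For $a\le\frac12$ this diverges with $N$: the variance is carried by the small-measure set of $\alpha$ with small-denominator resonances. So no $L^2$ bound can give the uniform tightness, and to get smallness in probability you must excise the resonant $\alpha$'s and control what remains on the complement --- which is precisely the content of the paper's Section 3 (the sets $\sset$, $\shset$, $E_{N,\epsilon}$ and the associated lemmas) and has no counterpart in your plan; your proposed estimates on $\min_{n<N}x_n$ and the low-lying grid points concern only the singularity and cannot fill this. (For $a>\frac12$ the same Parseval computation gives $\lVert R_{N,T}\rVert_{L^2(\mathbb{T}^2)}^2\lesssim T^{1-2a}$ uniformly in $N$, and your plan does go through.)
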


\section{Les termes non-résonants}
Définissons 
\begin{equation}
 \varphi_{N,\epsilon}(x) = 
 \begin{cases}
  \varphi(x)	& \qquad\text{si} \quad x\le\frac{\epsilon}{N}	\\
  \frac{1}{1-a}\left( \frac{N^a}{\epsilon^a}-1 \right) & \qquad \text{sinon}
 \end{cases}
\end{equation}
et 
\begin{equation}
 \axne{\triangle}=\frac{\sum_{n=0}^{N-1}\varphi_{N,\epsilon}(x+n\alpha)}{N^a}.
\end{equation}
Observons que $\int_{0}^{1}\varphi(x)dx=0$ et 
$\lambda\left\{(\alpha,x)\in\mathbb{T}^{2}|\quad \triangle(\alpha,x,N,\epsilon)\neq \frac{S(\alpha,x,N)}{N^a}\right\}\leq \epsilon$.

Si $k\in \mathbb{Z}^{*}$, 
\begin{eqnarray*}
\hat{\varphi}_{N,\epsilon}(k) &=&\int_{0}^{1}\exp(-2\pi ikx)\varphi_{N,\epsilon}(x)dx\\
   &=&\int_{\frac{\epsilon}{N}}^{1}\exp(-2\pi ikx)\frac{1}{x^a}dx+\int_{0}^{\frac{\epsilon}{N}}\frac{N^a}{(1-a)\epsilon^a}\exp(-2\pi ikx)dx\\
   &=&\frac{1}{1-a}[c_{N,\epsilon}(k)-i d_{N,\epsilon}(k)]-\frac{N^a}{(1-a)\epsilon^a}\frac{1}{2\pi ik}(\exp(-2\pi ik\frac{\epsilon}{N})-1).
\end{eqnarray*}

\begin{eqnarray*}
&&\nep{\hat{\varphi}}(k)\expi{kx}+\nep{\hat{\varphi}}(-k)\expin{kx}\\
&=&\left[\frac{2\nep{b}(k)}{k^{1-a}}+\frac{2N^a}{(1-a)\epsilon^a}\frac{1}{2\pi k}\sin(2\pi k \frac{\epsilon}{N})\right]\cos(2\pi kx)\\
&&+\left[\frac{2\nep{d}(k)}{k^{1-a}}-\frac{2N^a}{(1-a)\epsilon^a}\frac{1}{2\pi k}(\cos(2\pi k \frac{\epsilon}{N})-1)\right]\sin(2\pi kx),
\end{eqnarray*}
où
\begin{eqnarray}
 \nep{b}(k)&=&\int_{\frac{k\epsilon}{N}}^{k}\frac{\cos(2\pi x)}{x^a}dx\\
 \nep{d}(k)&=&\int_{\frac{k\epsilon}{N}}^{k}\frac{\sin(2\pi x)}{x^a}dx 
\end{eqnarray}

Donc
\begin{equation}
 \axne{\triangle}=\sum_{k=1}^{\infty}\axnke{g},
\end{equation}
où
\begin{eqnarray*}
 \axnke{g}&=&\left[\frac{2\nep{b}(k)}{k^{1-a}N^a}+\frac{2}{(1-a)\epsilon^a}\frac{\sin(2\pi k \frac{\epsilon}{N})}{2\pi k}\right]
 \frac{\cos\left[2\pi kx+\pi (N-1)k\alpha\right]\sin(\pi Nk\alpha)}{\sin(\pi k \alpha)}\\
 &&+\left[\frac{2\nep{d}(k)}{k^{1-a}N^a}-\frac{2}{(1-a)\epsilon^a}\frac{\cos(2\pi k \frac{\epsilon}{N})-1}{2\pi k}\right]
 \frac{\sin\left[2\pi kx+\pi (N-1)k\alpha\right]\sin(\pi Nk\alpha)}{\sin(\pi k \alpha)}\\
\end{eqnarray*}

\begin{lem}
$ \exists C >0$ ne dépendant que de $a$ tel que
 \begin{eqnarray}
  |\nep{b}(k)|&\leq& min\left\{C,C\frac{N^a}{k^a \epsilon^a}\right\}\\
  |\nep{d}(k)|&\leq& min\left\{C,C\frac{N^a}{k^a \epsilon^a}\right\}\\
  |\sin(2\pi k \frac{\epsilon}{N})|&\leq& min\left\{C,C\frac{k^a \epsilon^a}{N^a}\right\} \\
  |\cos(2\pi k \frac{\epsilon}{N})-1|&\leq& min\left\{C,C\frac{k^a \epsilon^a}{N^a}\right\} 
 \end{eqnarray}

\end{lem}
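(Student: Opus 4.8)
The plan is to regroup the four inequalities into two families according to the mechanism behind them. Writing $A=\frac{k\epsilon}{N}$ for the common lower endpoint, the bounds on $\nep{b}(k),\nep{d}(k)$ read $\min\{C,CA^{-a}\}$, while the two trigonometric bounds read $\min\{C,CA^{a}\}$. In every estimate the only structural input is the hypothesis $a\in\,]0,1[$, which is exactly what allows the two regimes $A\le 1$ and $A\ge 1$ to be matched; I would therefore organise each proof by cutting at $A=1$.

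I would dispatch the two trigonometric inequalities first, since they are purely elementary. The constant part of each $\min$ is immediate from $|\sin\theta|\le 1$ and $|\cos\theta-1|\le 2$ (take $C\ge 2$). For the power part, use $|\sin\theta|\le|\theta|$ and $|\cos\theta-1|\le\tfrac12\theta^2$: when $A\le 1$ we have $A\le A^{a}$ and $A^{2}\le A^{a}$ precisely because $0<a<1$, so $|\sin(2\pi A)|\le 2\pi A\le 2\pi A^{a}$ and $|\cos(2\pi A)-1|\le 2\pi^{2}A^{2}\le 2\pi^{2}A^{a}$; when $A\ge 1$ the constant bound already gives $\le C\le CA^{a}$. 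This establishes the third and fourth inequalities.

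For the integral terms I would prove the two halves of the $\min$ by different means. The decay half, needed on the range $A\ge 1$, comes from oscillatory integration by parts: $\nep{b}(k)=\bigl[\tfrac{\sin 2\pi x}{2\pi x^{a}}\bigr]_{A}^{k}+\tfrac{a}{2\pi}\int_{A}^{k}\tfrac{\sin 2\pi x}{x^{a+1}}\,dx$. Since $x\ge A\ge 1$ throughout, each boundary term is $\le\tfrac{1}{2\pi}A^{-a}$ and the remaining integral is $\le\tfrac{a}{2\pi}\int_{A}^{\infty}x^{-a-1}\,dx=\tfrac{1}{2\pi}A^{-a}$, whence $|\nep{b}(k)|\le CA^{-a}=C\frac{N^{a}}{k^{a}\epsilon^{a}}$; the same computation with $\cos$ replacing $\sin$ in the boundary terms handles $\nep{d}(k)$. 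For the constant half I would split at $x=1$: on $[A,1]$ (when $A<1$) the crude estimate $\int_{A}^{1}x^{-a}\,dx\le\frac{1}{1-a}$ is finite because $a<1$ makes $x^{-a}$ integrable at the origin, and on $[1,k]$ the same integration by parts (now with lower endpoint $1$) yields a bound independent of $k,N,\epsilon$. Choosing $C$ to be the maximum of the finitely many absolute constants so produced gives both inequalities.

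The only genuinely substantive step is the oscillatory integration-by-parts estimate producing the $A^{-a}$ decay; everything else is bookkeeping with elementary inequalities together with the integrability of $x^{-a}$ near $0$. The main point to watch is that all constants depend only on $a$, so that a single $C$ can be made to serve in all four statements.
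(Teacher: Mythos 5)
Your proof is correct, and since the paper itself gives no argument beyond ``on peut le prouver par calculation simple'', your write-up is precisely the elementary calculation being left to the reader: the trigonometric bounds follow from $|\sin\theta|\le\min\{1,|\theta|\}$, $|\cos\theta-1|\le\min\{2,\theta^{2}/2\}$ together with $a\in\,]0,1[$, and the bounds on $b_{N,\epsilon}(k)$, $d_{N,\epsilon}(k)$ follow from integrability of $x^{-a}$ near $0$ plus one oscillatory integration by parts. All your constants depend only on $a$, so the argument is complete as stated (minor remark: the integration-by-parts bound $|b_{N,\epsilon}(k)|\le \frac{3}{2\pi}A^{-a}$ actually holds for every $A=k\epsilon/N\le k$, not only $A\ge 1$, so your restriction to that range is harmless but unnecessary).
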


\begin{proof}
On peut le prouver par calculation simple. 
\end{proof}

Définissons 
\begin{equation}
 \axne{\bar{\triangle}}=\sum_{0<k<\frac{N}{\puie{1+2a}}}\axnke{g}.
\end{equation}

\begin{lem}
$ \exists C >0$ ne dépendant que de $a$ tel que 
 \begin{equation}
  \lVert\triangle(\cdotp,\cdotp,N,\epsilon)-\bar{\triangle}(\cdotp,\cdotp,N,\epsilon)\rVert_{L^2(\mathbb{T}^{2})}^2\leq C\epsilon.
 \end{equation}
\end{lem}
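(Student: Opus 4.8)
The plan is to use the fact that $\triangle-\bar{\triangle}$ is exactly the tail of the series, namely $\triangle(\alpha,x,N,\epsilon)-\bar{\triangle}(\alpha,x,N,\epsilon)=\sum_{k\geq \frac{N}{\puie{1+2a}}}\axnke{g}$, and to compute its $L^2(\mathbb{T}^{2})$ norm frequency by frequency via orthogonality. Writing $R_k(\alpha)=\frac{\sin(\pi Nk\alpha)}{\sin(\pi k\alpha)}$, denoting by $\nep{A}(k)$ and $\nep{B}(k)$ the two bracketed coefficients appearing in the definition of $\axnke{g}$, and setting $\theta_k=\pi(N-1)k\alpha$, each term reads
\[
\axnke{g}=R_k(\alpha)\left[\nep{A}(k)\cos(2\pi kx+\theta_k)+\nep{B}(k)\sin(2\pi kx+\theta_k)\right].
\]

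First I would integrate in $x$ at fixed $\alpha$. Since the frequencies $k$ are distinct positive integers, the functions $\cos(2\pi kx+\theta_k)$ and $\sin(2\pi kx+\theta_k)$ are mutually orthogonal on $\mathbb{T}^{1}$, so all cross terms vanish and only the diagonal survives, giving $\int_0^1(\triangle-\bar{\triangle})^2\,dx=\frac12\sum_{k\geq \frac{N}{\puie{1+2a}}}\left(\nep{A}(k)^2+\nep{B}(k)^2\right)R_k(\alpha)^2$. I would then integrate in $\alpha$: recognizing $R_k(\alpha)=\left|\sum_{n=0}^{N-1}\expi{nk\alpha}\right|$ as the modulus of a geometric sum with nonzero integer frequency $k$, orthogonality of the characters yields $\int_0^1 R_k(\alpha)^2\,d\alpha=N$ for every $k\geq 1$. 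Hence
\[
\lVert\triangle(\cdotp,\cdotp,N,\epsilon)-\bar{\triangle}(\cdotp,\cdotp,N,\epsilon)\rVert_{L^2(\mathbb{T}^{2})}^2=\frac{N}{2}\sum_{k\geq \frac{N}{\puie{1+2a}}}\left(\nep{A}(k)^2+\nep{B}(k)^2\right).
\]

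It then remains to bound the summand in the range $k\geq \frac{N}{\puie{1+2a}}$. For such $k$ one has $\frac{k\epsilon}{N}\geq \puie{-2a}\geq 1$, so the preceding Lemma applies in the branches $|\nep{b}(k)|,|\nep{d}(k)|\leq \frac{CN^a}{k^a\epsilon^a}$, while $|\sin(2\pi k\frac{\epsilon}{N})|,|\cos(2\pi k\frac{\epsilon}{N})-1|\leq C$. Substituting these into $\nep{A}(k)$ and $\nep{B}(k)$ shows that each of the four contributions is $O\!\left(\frac{1}{\epsilon^a k}\right)$, whence $\nep{A}(k)^2+\nep{B}(k)^2\leq \frac{C}{\epsilon^{2a}k^2}$. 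Summing the tail with $\sum_{k\geq K}k^{-2}\leq \frac{2}{K}$ for $K=\frac{N}{\puie{1+2a}}$ gives $\sum_{k\geq \frac{N}{\puie{1+2a}}}\left(\nep{A}(k)^2+\nep{B}(k)^2\right)\leq \frac{C\puie{1+2a}}{N\epsilon^{2a}}$, and multiplying by $\frac{N}{2}$ leaves $\frac{C}{2}\puie{1+2a-2a}=C'\epsilon$, which is the claimed bound.

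The two orthogonality reductions are straightforward and the remaining estimates are routine; the only delicate point is verifying which branch of each minimum in the Lemma is active. Everything hinges on the inequality $\frac{k\epsilon}{N}\geq 1$ holding throughout the tail, which is exactly what selects the decaying branches $\frac{N^a}{k^a\epsilon^a}$ for $b,d$ and the bounded branches for the sine and cosine terms, and which makes the powers of $\epsilon$ and $N$ cancel down to a single clean factor of $\epsilon$. I expect this bookkeeping to be the main, and essentially the only, obstacle.
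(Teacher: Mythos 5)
Your proof is correct and follows essentially the same route as the paper: orthogonality of the phases in $x$, the identity $\int_0^1\bigl(\sin(\pi Nk\alpha)/\sin(\pi k\alpha)\bigr)^2d\alpha= N$, the bounds $|\nep{b}(k)|,|\nep{d}(k)|\leq C N^a/(k^a\epsilon^a)$ together with the trivial bounds on the sine and cosine terms, and the tail estimate $\sum_{k\geq K}k^{-2}\leq C/K$ with $K=N/\puie{1+2a}$. Your bookkeeping of which branch of each minimum to use matches the paper's computation exactly, and the powers of $\epsilon$ and $N$ cancel as you claim.
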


\begin{proof}
 Puisque $\left\{\cos\left[2\pi kx+\pi (N-1)k\alpha\right],\sin\left[2\pi k'x+\pi (N-1)k'\alpha\right]\right\}_{k,k'\in \mathbb{N}^*}$
 sont orthogonaux sur $L^2_x(\mathbb{T}^{1})$ et 
 \begin{equation*}
  \int_{\mathbb{T}^{2}}\left(\frac{\sin(\pi Nk\alpha)}{\sin(\pi k\alpha)}\right)^2 d\alpha\leq N,
 \end{equation*}
 on a 
 \begin{eqnarray*}
 &&\lVert\triangle(\cdotp,\cdotp,N,\epsilon)-\bar{\triangle}(\cdotp,\cdotp,N,\epsilon)\rVert_{L^2(\mathbb{T}^{2})}^2\\
 &\leq&\sum_{k\geq\frac{N}{\puie{1+2a}}}N\cdotp\left(C\frac{1}{k^{1-a}N^a}\cdotp\frac{N^a}{k^a\puie{a}}+C\frac{1}{k\puie{a}}\right)^2\\
 &\leq& C\frac{N}{\puie{2a}}\sum_{k\geq\frac{N}{\puie{1+2a}}}\frac{1}{k^2}\\
 &\leq& C\frac{N}{\puie{2a}}\frac{\puie{1+2a}}{N}=C\epsilon.
 \end{eqnarray*}
\end{proof}

Définissons 
\begin{equation}
 \axne{\tilde{\triangle}}=\sum_{k\in \sset}\axnke{g},
\end{equation}
où
\begin{equation}
 \sset=\left\{k\in\mathbb{N}^*|\quad k<\frac{N}{\puie{1+2a}},\quad k^{1-a}|\kal|\leq \frac{1}{\puie{1+a+2a^2}N^a}\right\},
\end{equation}
$\kal\in ]-\frac{1}{2},\frac{1}{2}]$ tel que $k\alpha-\kal\in\mathbb{Z}$. 

\begin{lem}
$ \exists C >0$ ne dépendant que de $a$ tel que 
 \begin{equation}
  \lVert\triangle(\cdotp,\cdotp,N,\epsilon)-\tilde{\triangle}(\cdotp,\cdotp,N,\epsilon)\rVert_{L^2(\mathbb{T}^{2})}^2\leq C\epsilon.
 \end{equation}
\end{lem}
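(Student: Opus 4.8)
The plan is to combine the triangle inequality in $L^2$ with the preceding lemma. Since $\lVert\triangle-\bar{\triangle}\rVert_{L^2}^2\le C\epsilon$ is already known and $(\lVert f\rVert+\lVert g\rVert)^2\le 2\lVert f\rVert^2+2\lVert g\rVert^2$, it suffices to prove $\lVert\bar{\triangle}(\cdotp,\cdotp,N,\epsilon)-\tilde{\triangle}(\cdotp,\cdotp,N,\epsilon)\rVert_{L^2(\mathbb{T}^2)}^2\le C\epsilon$. Setting $\bar{S}=\{k\in\mathbb{N}^*:0<k<N/\puie{1+2a}\}\setminus\sset$, we have $\bar{\triangle}-\tilde{\triangle}=\sum_{k\in\bar S}\axnke{g}$, and $k\in\bar S$ holds precisely when $0<k<N/\puie{1+2a}$ and $|\kal|>\delta_k$, where
\[
 \delta_k=\frac{1}{\puie{1+a+2a^2}N^ak^{1-a}}.
\]

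First I would invoke the same orthogonality of $\{\cos[2\pi kx+\pi(N-1)k\alpha],\sin[2\pi kx+\pi(N-1)k\alpha]\}_k$ in $L^2_x(\mathbb{T}^1)$ used in the previous lemma, which reduces the double integral to a single sum over $k$ of $\alpha$-integrals:
\[
 \lVert\bar{\triangle}-\tilde{\triangle}\rVert_{L^2}^2\le\sum_{0<k<N/\puie{1+2a}}A_k^2\int_{\{|\kal|>\delta_k\}}\left(\frac{\sin(\pi Nk\alpha)}{\sin(\pi k\alpha)}\right)^2d\alpha.
\]
Here $A_k$ bounds both coefficients of $\axnke{g}$; the first lemma gives $A_k\le\min\{C/(k^{1-a}N^a),\,C/(k\puie{a})\}$, the two estimates arising respectively from the uniform and the decaying bounds on $\nep{b},\nep{d}$ and on the sine/cosine factors.

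Next I would estimate the restricted $\alpha$-integral. Bounding $|\sin(\pi Nk\alpha)|\le 1$, using $|\sin(\pi k\alpha)|=|\sin(\pi\kal)|\ge 2|\kal|$, and applying the measure-preserving change of variables $\alpha\mapsto k\alpha\bmod 1$ (under which $\kal$ becomes uniform on $(-\tfrac12,\tfrac12]$), I obtain
\[
 \int_{\{|\kal|>\delta_k\}}\left(\frac{\sin(\pi Nk\alpha)}{\sin(\pi k\alpha)}\right)^2d\alpha\le\int_{\delta_k<|\beta|\le 1/2}\frac{d\beta}{4\beta^2}\le\frac{1}{2\delta_k}=\frac{\puie{1+a+2a^2}N^ak^{1-a}}{2}.
\]

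Finally I would split the sum at the crossover $k\simeq N/\epsilon$, where the two bounds for $A_k$ coincide. For $k<N/\epsilon$ the bound $A_k\le C/(k^{1-a}N^a)$ yields a summand $\le C\puie{1+a+2a^2}N^{-a}k^{a-1}$, and $\sum_{k<N/\epsilon}k^{a-1}\sim(N/\epsilon)^a/a$ gives a total of order $\puie{1+2a^2}$. For $N/\epsilon\le k<N/\puie{1+2a}$ the bound $A_k\le C/(k\puie{a})$ yields a summand $\le C\puie{1-a+2a^2}N^ak^{-1-a}$, and the convergent tail $\sum_{k\ge N/\epsilon}k^{-1-a}\sim(N/\epsilon)^{-a}/a$ again gives order $\puie{1+2a^2}$. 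In both regimes the powers of $N$ cancel exactly and $\puie{1+2a^2}\le\epsilon$, so $\lVert\bar{\triangle}-\tilde{\triangle}\rVert^2\le C\epsilon$ and the lemma follows. The main obstacle is precisely this exponent bookkeeping: the threshold exponent $1+a+2a^2$ in the definition of $\sset$ is tuned so that both partial sums—each dominated by the boundary term $k\simeq N/\epsilon$—collapse to $O(\epsilon)$ with the $N$-dependence cancelling; one must also verify $\delta_k<1/2$ throughout the range (valid for $N$ large relative to $\epsilon$) so that the change of variables is meaningful.
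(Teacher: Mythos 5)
Your proof is correct and follows essentially the same route as the paper's: reduce to $\lVert\bar{\triangle}-\tilde{\triangle}\rVert_{L^2}^2$, use orthogonality in $x$ together with the coefficient bounds from the first lemma, and estimate the $\alpha$-integral of the kernel over the non-resonant set $\{|\kal|>\delta_k\}$ by $O(1/\delta_k)=O(\puie{1+a+2a^2}N^a k^{1-a})$ before summing over $k$. The only cosmetic differences are that the paper evaluates that integral via a layer-cake decomposition into the sets $B(k,p)$ (summing $\sum_p p^{-2}$) rather than your change of variables $\alpha\mapsto k\alpha$, and that it uses the single bound $C/(k^{1-a}N^a)$ on the whole range $k<N/\puie{1+2a}$ instead of your split at $k\simeq N/\epsilon$, obtaining $C\epsilon$ directly where you obtain the slightly sharper $C\puie{1+2a^2}$.
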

\begin{proof}
Notons
\begin{equation*}
 B(k,p)=\left\{\alpha\in\mathbb{T}^{1}|\quad\leq k^{1-a}|\kal|<\frac{p+1}{\puie{1+a+2a^2}N^a}\right\}.
\end{equation*}
Alors $|B(k,p)|\leq \frac{1}{k^{1-a}\puiea N^a}$.
 \begin{eqnarray*}
 &&\lVert\bar{\triangle}(\cdotp,\cdotp,N,\epsilon)-\tilde{\triangle}(\cdotp,\cdotp,N,\epsilon)\rVert_{L^2(\mathbb{T}^{2})}^2\\
 &\leq&\sum_{k<\frac{N}{\puie{1+2a}}}\left(\frac{C}{k^{1-a}N^a}+\frac{C\frac{k^a\puie{a}}{N^a}}{\puie{a}k}\right)^2 \sum_{p=1}^{\infty}\int_{\mathbb{T}^{1}}\frac{1}{\kal^2}\opid_{B(k,p)}d\alpha\\
 &\leq&\sum_{k<\frac{N}{\puie{1+2a}}}\frac{C}{k^{2-2a}N^{2a}}\sum_{p=1}^{\infty}\frac{k^{1-a}\puie{1+a+2a^2}N^a}{p^2
 }\\
 &\leq&\frac{C\puie{1+a+2a^2}}{N^a}\sum_{k<\frac{N}{\puie{1+2a}}}\frac{1}{k^{1-a}}\\
 &\leq&\frac{C\puie{1+a+2a^2}}{N^a}\cdotp\frac{N^a}{\puie{a+2a^2}}=C\epsilon.
 \end{eqnarray*}
\end{proof}

Définissons
\begin{equation}
 \axne{\hat{\triangle}}=\sum_{k\in \shset}\axnke{g},
\end{equation}
où
\begin{equation}
 \shset=\left\{k\in\mathbb{N}^*|\quad \puieb N<k<\frac{N}{\puie{1+2a}},\quad k^{1-a}|\kal|\leq \frac{1}{\puie{1+a+2a^2}N^a}\right\}.
\end{equation}

Posons
\begin{eqnarray*}
 E_{k,N,\epsilon}&=&\left\{\alpha\in\mathbb{T}^{1}|\quad k^{1-a}|\kal|\leq\frac{1}{\puiea N^a}\right\}\\
 E_{N,\epsilon}&=&\bigcup_{k\leq \puieb N}E_{k,N,\epsilon}.
\end{eqnarray*}
On a 
\begin{equation}
\lVert\triangle(\cdotp,\cdotp,N,\epsilon)-\hat{\triangle}(\cdotp,\cdotp,N,\epsilon)\rVert_{L^2(\torm{E_{N,\epsilon}})}^2\leq C\epsilon
\qquad \text{et}\qquad |E_{N,\epsilon}|\leq C\epsilon.
\end{equation}

Définissons
\begin{equation}
 \axne{\diamondsuit}=\sum_{k\in\shset}\axnke{\tilde{g}},
\end{equation}
où
\begin{eqnarray*}
\axnke{\tilde{g}}&=&\left[\frac{2b}{k^{1-a}N^a}+\frac{2}{(1-a)\epsilon^a}\frac{\sin(2\pi k \frac{\epsilon}{N})}{2\pi k}\right]
 \frac{\cos\left[2\pi kx+\pi (N-1)k\alpha\right]\sin(\pi Nk\alpha)}{\pi \kal}\\
 &&+\left[\frac{2d}{k^{1-a}N^a}-\frac{2}{(1-a)\epsilon^a}\frac{\cos(2\pi k \frac{\epsilon}{N})-1}{2\pi k}\right]
 \frac{\sin\left[2\pi kx+\pi (N-1)k\alpha\right]\sin(\pi Nk\alpha)}{\pi\kal}\\
b&=&\int_{0}^{\infty}\frac{\cos(2\pi x)}{x^a}dx\\
d&=&\int_{0}^{\infty}\frac{\sin(2\pi x)}{x^a}dx\\ 
\end{eqnarray*}

En calculant, on obtient
\begin{eqnarray*}
&&\lVert\hat{\triangle}(\cdotp,\cdotp,N,\epsilon)-\diamondsuit(\cdotp,\cdotp,N,\epsilon)\rVert_{L^2(\torm{E_{N,\epsilon}})}^2 \\
&\leq&C\puie{1-4a^2}\left(\frac{\sin \theta_N}{\theta_N}-1\right)^2+\frac{C}{N^2a\puie{3+2a+8a^2}}\\
&&\longrightarrow 0 \qquad \text{lorsque}\quad N\rightarrow\infty,
\end{eqnarray*}
où $\theta_N=\frac{1}{\puie{\frac{2}{a}+2a}N}$.

Si $\uset$ est une partie de $\mathbb{N}^*$ contenant $\shset$ et
\begin{equation}
 \axne{\bar{\diamondsuit}}=\sum_{k\in\uset}\axnke{\tilde{g}},
\end{equation}

alors on aura
\begin{equation}
\varlimsup_{N\rightarrow\infty}\lVert\triangle(\cdotp,\cdotp,N,\epsilon)-\bar{\diamondsuit}(\cdotp,\cdotp,N,\epsilon)\rVert_{L^2(\torm{E_{N,\epsilon}})}^2\leq C\epsilon.
\end{equation}

\section{Les termes oscillants}
Notons,
\begin{displaymath}
  g_{T}=
  \big(\begin{smallmatrix}
    e^{-T}& 0\\
    0& e^{T}
  \end{smallmatrix}\big),\quad \Lambda_{\alpha}=
  \big(\begin{smallmatrix}
    1&0\\
    \alpha&1
  \end{smallmatrix}\big). 
\end{displaymath}

Considérons le réseaux $L(N,\alpha)=g_{\ln N}\Lambda_{\alpha}\mathbb{Z}^2$. On associe à chaque $k\in\mathbb{Z}$ un vecteur
$\kv=(k,k')$ où $k'=k'(k,\alpha)$ l'unique entier tel que $|\kal|=|k\alpha+k'|\leq \frac{1}{2}$ et $k\alpha+k'\neq -\frac{1}{2}$.
Notons $(X,Z)=g_{\ln N}\Lambda_{\alpha}\kv$. 
Observons que $k\in\shset$ si et seulement si 
\begin{equation*}
 \puieb<X<\frac{1}{\puie{1+2a}},\quad |Z|<\frac{1}{X^{1-a}\puiea}.
\end{equation*}

Notons $e_i(N,\alpha)=e_i(L(N,\alpha))$. On aura besoin la Proposition
suivante (voir \cite{DF14} Proposition 4.3).

\begin{prop}
 \label{thm:unidis}
 Soit $\Phi:(\mathbb{R}^2)^2\times\mathbb{R}\rightarrow\mathbb{R}$ continue et bornée. Alors on a,
 \begin{equation}
  \limn\int\Phi(e_1(N,\alpha),e_2(N,\alpha),\alpha)d\alpha
 =\int_{\tilde{M}\times\mathbb{T}^{1}}\Phi(e_1(L),e_2(L),\alpha)d\tilde{\mu}(L)d\alpha.
 \end{equation}

\end{prop}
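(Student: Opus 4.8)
The plan is to recast the statement as a weak-$*$ convergence of probability measures on $\tilde{M}\times\mathbb{T}^1$. For each $N$ let $\nu_N$ be the pushforward of Lebesgue measure on $[0,1)$ under $\alpha\mapsto(L(N,\alpha),\alpha)$, and set $\nu_\infty=\tilde{\mu}\times\lambda$. Since $e_1,e_2$ are defined and continuous at $\tilde{\mu}$-almost every $L$, the map $L\mapsto\Phi(e_1(L),e_2(L),\alpha)$ is bounded and $\tilde{\mu}$-a.e. continuous, and $\nu_\infty$ assigns zero mass to the discontinuity locus; so by the portmanteau theorem it suffices to establish $\nu_N\to\nu_\infty$ weakly against genuinely continuous bounded functions $F(L,\alpha)$ on $\tilde{M}\times\mathbb{T}^1$. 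By Stone--Weierstrass and linearity I would then reduce to the separated test functions $F(L,\alpha)=\phi(L)\,\expi{m\alpha}$ with $\phi\in C(\tilde{M})$ bounded and $m\in\mathbb{Z}$.

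The case $m=0$ is exactly the equidistribution of an expanding closed horocycle. Writing $g_T=\mathrm{diag}(e^{-T},e^{T})$ one has the conjugation identity $g_T\Lambda_\alpha=\Lambda_{\alpha e^{2T}}g_T$, so that $L(N,\alpha)=\Lambda_{\alpha N^2}\bigl(g_{\ln N}\mathbb{Z}^2\bigr)$. As $\alpha$ runs over $[0,1)$ this traverses, exactly once, the closed horocycle through $g_{\ln N}\mathbb{Z}^2$, whose period is $N^2$. The classical theorem on equidistribution of long closed horocycles in $SL(2,\mathbb{R})/SL(2,\mathbb{Z})$ then yields $\int_0^1\phi(L(N,\alpha))\,d\alpha\to\int_{\tilde{M}}\phi\,d\tilde{\mu}$, which is the assertion for $m=0$.

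For $m\neq0$ the task is to show that the oscillatory integral $\int_0^1\phi(L(N,\alpha))\,\expi{m\alpha}\,d\alpha$ tends to $0$; this carries the real content, as it encodes the decorrelation of $L(N,\alpha)$ from $\alpha$ itself. The mechanism I would use is a separation of scales: because of the factor $N^2$ in the conjugation identity, $\phi(L(N,\alpha))$ varies on the scale $\alpha\sim N^{-2}$, far finer than the scale $1$ on which $\expi{m\alpha}$ oscillates. I would partition $[0,1)$ into intervals $I_j$ of an intermediate length $\delta_N$ with $N^{-2}\ll\delta_N\ll1$, on each of which $\expi{m\alpha}$ is nearly constant while the rescaled horocycle segment remains long enough to equidistribute. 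Replacing $\expi{m\alpha}$ by a sampled value $\expi{m\alpha_j}$ and estimating $\int_{I_j}\phi(L(N,\alpha))\,d\alpha\approx|I_j|\int_{\tilde{M}}\phi\,d\tilde{\mu}$ by an effective form of horocycle equidistribution, the sum over $j$ collapses to $\bigl(\int_{\tilde{M}}\phi\,d\tilde{\mu}\bigr)\int_0^1\expi{m\alpha}\,d\alpha=0$ in the limit. (Alternatively, one may view $(L(N,\alpha),\alpha)$ as an expanding translate of a periodic unipotent orbit in the product $\tilde{M}\times\mathbb{T}^1$ and invoke a Ratner-type equidistribution theorem, provided one rules out any nontrivial algebraic relation between the horocycle and the circle direction.)

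The main obstacle is precisely this $m\neq0$ estimate: it requires a quantitative rate of equidistribution for horocycle segments --- available from the spectral gap for $SL(2,\mathbb{R})/SL(2,\mathbb{Z})$ via Rankin--Selberg unfolding, or from Sarnak's bound --- uniform enough to survive summation over $\sim\delta_N^{-1}$ intervals and to absorb both the sampling error from $\expi{m\alpha}$ and the error from the incompleteness of the short segments. Choosing $\delta_N$ to balance these two competing error sources is the delicate point. Once this is in place, the three reductions assemble into the claimed joint convergence, which is the content of \cite{DF14}, Proposition~4.3.
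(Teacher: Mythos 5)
The paper does not actually prove this proposition: it is imported wholesale from \cite{DF14}, Proposition 4.3 (``voir \cite{DF14} Proposition 4.3''), so there is no internal argument to compare yours against, and the question is only whether your sketch stands on its own. It essentially does, and it is a genuinely different route from the paper's (which is: cite and move on). Your preliminary reductions are sound: the mapping/portmanteau step is legitimate because the discontinuity locus of $(e_1,e_2)$ is $\tilde{\mu}$-null and the limit measure is a product, and the Stone--Weierstrass reduction to test functions $\phi(L)\exp(2\pi i m\alpha)$ is fine provided you take $\phi\in C_c(\tilde{M})$ and use that the limit is a probability measure to upgrade vague convergence to weak convergence (no escape of mass); this is a minor point worth one sentence since $\tilde{M}$ is non-compact. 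The conjugation identity $g_T\Lambda_\alpha=\Lambda_{\alpha e^{2T}}g_T$ is correct, so $L(N,\alpha)=\Lambda_{\alpha N^2}\,g_{\ln N}\mathbb{Z}^2$ does traverse once a closed horocycle of period $N^2$, and the case $m=0$ is indeed classical (Sarnak/Zagier, or Eskin--McMullen mixing). For $m\neq 0$, the one place where your sketch leans on a nontrivial input, note that effective equidistribution of \emph{full} closed horocycles (Sarnak's bound, which you invoke) is not sufficient: your sampling argument needs equidistribution of \emph{subsegments}, uniformly in their position along the horocycle. That is exactly Strömbergsson's theorem (Duke Math.\ J.\ 2004): subsegments of hyperbolic length at least $T^{1/2+\epsilon}$ of a closed horocycle of period $T$ equidistribute uniformly. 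With $T\asymp N^2$ your subsegments have length $\delta_N N^2$, so any $\delta_N\geq N^{-1+\epsilon}$ is admissible; taking, say, $\delta_N=N^{-1/2}$ makes both the sampling error $O(|m|\delta_N)$ and the equidistribution error vanish, so the ``delicate balance'' you flag is in fact comfortable, with a full range $N^{-1}\ll\delta_N\ll 1$ available. (The parenthetical Ratner-type alternative is less clean, since the relevant unipotent flow $(\Lambda_s,s/N^2)$ on $\tilde{M}\times\mathbb{T}^1$ changes with $N$; the sampling argument is the right formalization.) What the two approaches buy: your proof is self-contained modulo $SL(2)$-specific spectral technology and so works precisely in the present rank-one setting, whereas \cite{DF14} establishes the statement within a framework valid for toral translations in all dimensions; for this paper's purposes ($\tilde{M}=SL(2,\mathbb{R})/SL(2,\mathbb{Z})$) your argument is a legitimate substitute for the citation.
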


\begin{prop}
$\forall \epsilon>0\quad\exists M(\epsilon)>0$ tel que si $\alpha\notin\nep{E}$ 
alors $k\in\shset$ implique que
\begin{equation*}
 g_{\ln N}\Lambda_{\alpha}\kv=m_1e_1(N,\alpha)+m_2e_2(N,\alpha)
\end{equation*}
pour un unique $(m_1,m_2)\in \mathbb{Z}^{d+1}-{(0,0)}$, $\lVert m\rVert\leq M(\epsilon)$ et $X_m>0$.

Si on fixe $\epsilon>0$ et $N$ est assez grand et $\alpha\notin \nep{E}$, 
alors pour tout $\lVert m\rVert\leq M(\epsilon)$, il existera un unique $\kv\in\mathbb{Z}^2$ tel que
\begin{equation*}
 g_{\ln N}\Lambda_{\alpha}\kv=(m,e(N,\alpha))=m_1e_1(N,\alpha)+m_2e_2(N,\alpha).
\end{equation*}

On note $\uset$ l'ensemble de $k\in\mathbb{N}^*$ correspondant au l'ensemble de $m\in\mathbb{Z}^2-{(0,0)}$,$\lVert m\rVert\leq M(\epsilon)$ et $X_m>0$.
\end{prop}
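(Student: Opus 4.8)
L'idée est de travailler dans les coordonnées du point $g_{\ln N}\Lambda_{\alpha}\kv$. Comme $g_{\ln N}\Lambda_{\alpha}(k,k')=(k/N,\,N(k\alpha+k'))$, le point $(X,Z)$ associé à $\kv=(k,k')$ vérifie $X=k/N$ et $Z=N\kal$. Ainsi, comme déjà observé, $k\in\shset$ équivaut à ce que ce point appartienne à la région fixe
\[
R_\epsilon=\Big\{(X,Z):\ \puieb<X<\tfrac{1}{\puie{1+2a}},\ |Z|<\tfrac{1}{X^{1-a}\puiea}\Big\},
\]
laquelle, puisque $X>\puieb$, est contenue dans une boîte bornée ne dépendant que de $\epsilon$; en particulier tout tel point $P$ vérifie $\lVert P\rVert\le\rho(\epsilon)$ pour une constante $\rho(\epsilon)$. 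D'autre part, en déroulant la définition, $\alpha\notin\nep{E}$ signifie exactement que pour tout entier $1\le k\le\puieb N$ on a $k^{1-a}|\kal|>\tfrac{1}{\puiea N^a}$. Toute la proposition se ramènera à contrôler les minima successifs $\lambda_1\le\lambda_2$ du réseau unimodulaire $L(N,\alpha)$, de sorte que $\lVert e_1\rVert=\lambda_1$ et $\lVert e_2\rVert=\lambda_2$: j'affirme que $\alpha\notin\nep{E}$ force $\lambda_1>\puieb$, puis le deuxième théorème de Minkowski donne $\lambda_2\le c_0/\puieb$ pour une constante absolue $c_0$, si bien que $\lVert e_1\rVert$ et $\lVert e_2\rVert$ sont toutes deux comprises entre $\puieb$ et $c_0/\puieb$.

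Le point délicat, et l'obstacle principal que j'anticipe, est la minoration de $\lambda_1$, car c'est là que l'ensemble arithmétique $\nep{E}$ se traduit en une non-dégénérescence géométrique du réseau. Supposons $\lambda_1\le\puieb\le1$ et écrivons le vecteur le plus court sous la forme $e_1=g_{\ln N}\Lambda_{\alpha}(k_0,k_0')$, de coordonnée $(X_1,Z_1)$, de sorte que $X_1^2+Z_1^2=\lambda_1^2$. Si $k_0=0$, alors $|Z_1|=N|k_0'|\ge N$, ce qui contredit $\lambda_1\le1$; donc $1\le|k_0|=N|X_1|\le N\lambda_1\le\puieb N$. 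De plus, comme $|\kal|\le|Z_1|/N$ pour $k=k_0$,
\[
|k_0|^{1-a}|\{k_0\alpha\}|\le|k_0|^{1-a}\tfrac{|Z_1|}{N}=N^{-a}|X_1|^{1-a}|Z_1|\le N^{-a}\lambda_1^{2-a}\le N^{-a},
\]
puisque $|X_1|,|Z_1|\le\lambda_1\le1$. Comme $N^{-a}\le\tfrac{1}{\puiea N^a}$, l'entier $k=|k_0|$ (en utilisant $|\{-k_0\alpha\}|=|\{k_0\alpha\}|$) témoigne de $\alpha\in E_{|k_0|,N,\epsilon}\subseteq\nep{E}$, une contradiction. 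Donc $\lambda_1>\puieb$.

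Ces bornes acquises, la première partie relève de la géométrie d'une base réduite. En dimension deux les vecteurs minimaux $e_1,e_2$ forment une base de $L(N,\alpha)$, et la réduction fournit une constante absolue $c_1>0$ telle que $\lVert m_1e_1+m_2e_2\rVert\ge c_1\big(|m_1|\,\lVert e_1\rVert+|m_2|\,\lVert e_2\rVert\big)$. Pour $k\in\shset$, le point $P=g_{\ln N}\Lambda_{\alpha}\kv$ est dans $R_\epsilon$, donc $\lVert P\rVert\le\rho(\epsilon)$; en écrivant l'unique décomposition $P=m_1e_1+m_2e_2$ et en utilisant $\lVert e_1\rVert,\lVert e_2\rVert>\puieb$, on obtient $|m_1|,|m_2|\le\rho(\epsilon)/(c_1\puieb)$, d'où $\lVert m\rVert\le M(\epsilon)$ avec $M(\epsilon):=\sqrt2\,\rho(\epsilon)/(c_1\puieb)$. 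L'unicité de $(m_1,m_2)$ est immédiate puisque $e_1,e_2$ est une base, et $X_m=X>\puieb>0$ entraîne en particulier $m\neq(0,0)$. Ceci montre aussi $\shset\subseteq\uset$.

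Pour la réciproque, fixons $\epsilon$, prenons $\lVert m\rVert\le M(\epsilon)$ et posons $P=m_1e_1+m_2e_2\in L(N,\alpha)$. Comme $g_{\ln N}\Lambda_{\alpha}$ est inversible sur $\mathbb{Z}^2$, il existe un unique $(j,j')\in\mathbb{Z}^2$ tel que $g_{\ln N}\Lambda_{\alpha}(j,j')=P$. Ici $\lVert P\rVert\le M(\epsilon)\big(\lVert e_1\rVert+\lVert e_2\rVert\big)\le 2M(\epsilon)c_0/\puieb=:\rho'(\epsilon)$ est borné indépendamment de $N$, de sorte que $|j\alpha+j'|=|Z|/N\le\rho'(\epsilon)/N<\tfrac12$ dès que $N>2\rho'(\epsilon)$. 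Ainsi $j'=k'(j,\alpha)$ est l'entier réduit et $(j,j')=\kv$ a la forme voulue, unique par injectivité; lorsque $X_m>0$ on a $j>0$, c'est-à-dire $k\in\mathbb{N}^*$. En déclarant $\uset$ l'ensemble de ces $k$ quand $m$ parcourt $\{m\in\mathbb{Z}^2\setminus\{0\}:\ \lVert m\rVert\le M(\epsilon),\ X_m>0\}$, on obtient une bijection entre $\uset$ et cet ensemble de $m$, ce qui achève la preuve.
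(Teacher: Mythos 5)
Your proof is correct, and it shares the paper's overall skeleton: (i) $k\in\shset$ places the point $g_{\ln N}\Lambda_{\alpha}\kv$ in a region whose diameter is bounded by a constant depending only on $\epsilon$, and (ii) $\alpha\notin\nep{E}$ yields a lower bound on the shortest nonzero vector of $L(N,\alpha)$ --- your contradiction argument giving $\lambda_1>\puieb$ is a fully detailed version of the paper's one-line assertion that every vector of $L(N,\alpha)$ is longer than some $\delta(\epsilon)$. The genuine divergence is in how these two facts are converted into a uniform $M(\epsilon)$. The paper argues softly: the lower bound on $\lambda_1$ plus Mahler's compactness criterion makes the family $\left\{L(N,\alpha)\,:\,\alpha\notin\nep{E}\right\}$ precompact in the space of lattices, and the constant in the equivalence between $\lVert m_1e_1+m_2e_2\rVert$ and $\lVert m\rVert$ is then uniform over this family, producing an inexplicit $M(\epsilon)$. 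You stay entirely quantitative: Minkowski's second theorem ($\lambda_1\lambda_2\le c_0$ for unimodular planar lattices) bounds $\lVert e_2\rVert$ above by $c_0/\puieb$, and the reduced-basis inequality $\lVert m_1e_1+m_2e_2\rVert\ge c_1\left(|m_1|\lVert e_1\rVert+|m_2|\lVert e_2\rVert\right)$, valid because $|\langle e_1,e_2\rangle|\le\frac12\lVert e_1\rVert^2$, turns the bound on $\lVert g_{\ln N}\Lambda_{\alpha}\kv\rVert$ directly into an explicit $M(\epsilon)$. Your route is more elementary (no compactness needed), gives explicit constants, and in the converse direction it also supplies the justification for the paper's step $\lVert(m,e(N,\alpha))\rVert\le\frac14 N$ for $N$ large, which the paper asserts without detail; the paper's route, on the other hand, uses nothing specific to dimension two (your appeals to the facts that the successive minima form a basis and that the angle between $e_1$ and $e_2$ is bounded away from $0$ are genuinely planar), so it is the argument that transfers verbatim to the higher-dimensional setting of Dolgopyat--Fayad from which this proposition is adapted.
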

\begin{proof}
Par la définition de $\shset$, on sait que si $k\in \shset$ alors 
$\lVert g_{\ln N}\Lambda_{\alpha}\kv\rVert\leq \puie{-1-2a}+\puie{-a^2-a-\frac{2}{a}}=R(\epsilon)$.
Comme les normes $\lVert x\rVert$ et $\lVert x_1e_1(T)+x_2e_2(T)\rVert$ sont équivalantes, pour tout $L\in\tilde{M}$ il existe $M(L)$
tel que $\lVert m_1e_1(T)+m_2e_2(T)\rVert\geq R(\epsilon)$ une fois $\lVert m\rVert\geq M(L)$. Pour montrer que $M(L)$ peut être choisi uniformément, il suffit de montrer que
\begin{equation*}
 \{L(N,\alpha)|\quad \alpha\notin\nep{E}\}
\end{equation*}
est précompact. Par la définition de $\nep{E}$, si $|X|\leq \puieb$ alors $|Z|>\frac{1}{X^{1-a}\puiea}$. Donc $\exists \delta(\epsilon)>0$ tel que si $\alpha\notin\nep{E}$ alors 
tout vecteur dans $L(N,\alpha)$ est plus long que $\delta(\epsilon)$. On déduit la précompacité du critère de la compacité de Mahler.

Pour tout $\lVert m\rVert\leq M(\epsilon)$, il existe $\bar{k}\in\mathbb{Z}^2$ tel que $(m,e(N,\alpha))= g_{\ln N}\Lambda_{\alpha}\bar{k}$. Pour $N$ suffisament grand,
on a $\lVert (m,e(N,\alpha))\rVert\leq \frac{1}{4} N$,$\forall\lVert m\rVert\leq M(\epsilon)$ et $\alpha\notin\nep{E}$. Donc $\bar{k}=\kv(k)$ dans ce cas.
\end{proof}

On en déduit que 
\begin{equation}
 \axne{\Box}= \axne{\bar{\diamondsuit}}\qquad \text{sur}\quad\torm{\nep{E}},
\end{equation}
où
\begin{eqnarray*}
 \axne{\Box}&=&\sum_{\mset}\axnme{h},\\
 \axnme{h}&=&\left[\frac{2b}{X_m^{1-a}}+\frac{2}{(1-a)\epsilon^a}\frac{\sin(2\pi X_m\epsilon)}{2\pi X_m}\right]
 \frac{\cos\left[2\pi m\cdotp \gamma+\pi \frac{N-1}{N}Z_m\right]\sin(\pi Z_m)}{\pi Z_m}\opid_{]0,\infty[}(X_m)\\
 &+&\left[\frac{2d}{X_m^{1-a}}-\frac{2}{(1-a)\epsilon^a}\frac{\cos(2\pi X_m\epsilon)-1}{2\pi X_m}\right]
 \frac{\sin\left[2\pi m\cdotp \gamma+\pi \frac{N-1}{N}Z_m\right]\sin(\pi Z_m)}{\pi Z_m}\opid_{]0,\infty[}(X_m),\\
 \gamma&=&\gamma(\alpha,x,N)=(Nxe_{11}(N,\alpha),Nxe_{21}(N,\alpha)).
\end{eqnarray*}

Définissons
\begin{eqnarray*}
 \ep{D}(L,\gamma)&=&\sum_{\mset} q(L,\gamma,m,\epsilon),\\
q(L,\gamma,m,\epsilon)&=&\left[\frac{2b}{X_m^{1-a}}+\frac{2}{(1-a)\epsilon^a}\frac{\sin(2\pi X_m\epsilon)}{2\pi X_m}\right]
 \frac{\cos\left[2\pi m\cdotp \gamma+\pi Z_m\right]\sin(\pi Z_m)}{\pi Z_m}\opid_{]0,\infty[}(X_m)\\
 &&+\left[\frac{2d}{X_m^{1-a}}-\frac{2}{(1-a)\epsilon^a}\frac{\cos(2\pi X_m\epsilon)-1}{2\pi X_m}\right]
 \frac{\sin\left[2\pi m\cdotp \gamma+\pi Z_m\right]\sin(\pi Z_m)}{\pi Z_m}\opid_{]0,\infty[}(X_m). 
\end{eqnarray*}

\begin{prop}
 Si $\alpha,x$ sont uniformément distribués dans $\mathbb{T}^{2}$, alors les variables aléatoires
 \begin{equation}
  e_1(N,\alpha),e_2(N,\alpha), \gamma
 \end{equation}
 convergent dans distribution ver $\mu$ lorsque $N\rightarrow\infty$.
 
 En particulier, la distribution de $\Box(\cdotp,\cdotp,N,\epsilon)$ converge vers la distribution de $\ep{D}$ lorsque $N\rightarrow\infty$.

\end{prop}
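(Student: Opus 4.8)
Le plan est d'établir d'abord la convergence en loi du triplet $(e_1(N,\alpha),e_2(N,\alpha),\gamma)$ vers $\mu$, la convergence de $\axne{\Box}$ vers $\ep{D}$ en découlant par un argument d'application continue. La Proposition~\ref{thm:unidis} fournit déjà la partie $(e_1,e_2,\alpha)$; tout le contenu nouveau est le traitement de la « torsion » $\gamma$. Notons $\nu_N$ la loi de $(e_1(N,\alpha),e_2(N,\alpha),\gamma)$ lorsque $(\alpha,x)$ est uniforme sur $\mathbb{T}^2$. En appliquant la Proposition~\ref{thm:unidis} à des fonctions ne dépendant que de $(e_1,e_2)$, la marginale de $(e_1(N,\alpha),e_2(N,\alpha))$ converge, donc est tendue; la composante $\gamma$ vivant dans le compact $\mathbb{T}^2$, la famille $(\nu_N)$ est tendue sur $(\mathbb{R}^2)^2\times\mathbb{T}^2$. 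Toute sous-suite admet alors une sous-sous-suite convergeant faiblement vers une probabilité $\nu$, et il suffit de montrer $\int F\,d\nu_N\to\int F\,d\mu$ pour $F(v_1,v_2,\gamma)=G(v_1,v_2)\expi{n\cdot\gamma}$ avec $G$ continue à support compact et $n\in\mathbb{Z}^2$: cette famille sépare les mesures, d'où $\nu=\mu$, puis $\nu_N\to\mu$.

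\emph{Intégration en $x$.} Soit $X_n=X_n(L(N,\alpha))$ la première coordonnée de $n_1e_1(N,\alpha)+n_2e_2(N,\alpha)$. Comme la première coordonnée de $g_{\ln N}\Lambda_\alpha\kv$ vaut $k/N$, on a $NX_n\in\mathbb{Z}$ et $n\cdot\gamma=x\,NX_n$, de sorte que $\int_0^1\expi{n\cdot\gamma}\,dx$ vaut $1$ si $NX_n=0$ et $0$ sinon. Pour $n=0$, la Proposition~\ref{thm:unidis} donne
\begin{equation*}
 \int F\,d\nu_N=\int_{\mathbb{T}^1}G(e_1(N,\alpha),e_2(N,\alpha))\,d\alpha\longrightarrow\int_{\tilde{M}}G(e_1(L),e_2(L))\,d\tilde{\mu}(L)=\int F\,d\mu.
\end{equation*}
Pour $n\neq0$, on a $\int F\,d\mu=0$ et
\begin{equation*}
 \Big|\int F\,d\nu_N\Big|=\Big|\int_{\{\alpha:\,NX_n=0\}}G(e_1(N,\alpha),e_2(N,\alpha))\,d\alpha\Big|\leq\lVert G\rVert_\infty\,\lambda\{\alpha\in\mathbb{T}^1:\,NX_n=0\},
\end{equation*}
et il reste à voir que ce dernier terme tend vers $0$.

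\emph{L'ensemble résonant (point principal).} Comme $NX_n\in\mathbb{Z}$, on a $\{NX_n=0\}=\{|X_n(L(N,\alpha))|<1/N\}$. En encadrant l'indicatrice de $\{|X_n|<\delta\}$ entre deux fonctions continues bornées et en appliquant la Proposition~\ref{thm:unidis}, on obtient pour tout $\delta>0$
\begin{equation*}
 \limsn\lambda\{\alpha:\,|X_n(L(N,\alpha))|<1/N\}\leq\tilde{\mu}\{L\in\tilde{M}:\,|X_n(L)|\leq2\delta\}.
\end{equation*}
La difficulté principale réside ici: il faut vérifier que $\tilde{\mu}\{X_n=0\}=0$, ce qui permet ensuite de faire $\delta\to0$. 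Pour cela j'utiliserais l'invariance de $\tilde{\mu}$ sous $SO(2)$: pour $\tilde{\mu}$-presque tout $L$ et presque tout $\theta$, les vecteurs courts sont covariants, $e_i(R_\theta L)=R_\theta e_i(L)$, d'où $X_n(R_\theta L)=w_1\cos\theta-w_2\sin\theta$ avec $w=n_1e_1(L)+n_2e_2(L)\neq0$ (les $e_i$ sont indépendants et $n\neq0$); cette fonction de $\theta$ n'a qu'un nombre fini de zéros. Par Fubini sur $SO(2)\times\tilde{M}$ et l'invariance,
\begin{equation*}
 \tilde{\mu}\{X_n=0\}=\int_{\tilde{M}}\frac{1}{2\pi}\,\mathrm{Leb}\{\theta\in[0,2\pi):\,X_n(R_\theta L)=0\}\,d\tilde{\mu}(L)=0.
\end{equation*}
Les termes $n\neq0$ disparaissent donc à la limite, ce qui achève $\nu_N\to\mu$.

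\emph{Passage à $\axne{\Box}$.} On écrit $\axne{\Box}=\Psi_N(e_1(N,\alpha),e_2(N,\alpha),\gamma)$, où $\Psi_N$ est la somme finie sur $\lVert m\rVert\leq M(\epsilon)$ des $\axnme{h}$ vus comme fonctions de $(X_m,Z_m,m\cdot\gamma)$, et $\ep{D}=\Psi(e_1,e_2,\gamma)$ avec $\Psi$ obtenue en y remplaçant $\frac{N-1}{N}$ par $1$. Hors de $D=\bigcup_{\lVert m\rVert\leq M(\epsilon)}\{X_m=0\}$, chaque $\Psi_N$ est continue (les indicatrices $\opid_{]0,\infty[}(X_m)$ y sont localement constantes et les facteurs $X_m^{-(1-a)}$, $\frac{\sin(\pi Z_m)}{\pi Z_m}$ réguliers) et $\Psi_N\to\Psi$ uniformément puisque $\frac{N-1}{N}\to1$. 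Comme $\mu(D)=0$ d'après l'étape précédente appliquée à chaque $m\neq0$, le théorème d'application continue dans sa version étendue (suite de fonctions convergeant uniformément hors d'un ensemble $\mu$-négligeable) donne la convergence en loi de $\axne{\Box}$ vers $\ep{D}$; en particulier $\ep{D}$ est finie $\mu$-presque partout.
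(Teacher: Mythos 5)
Your proposal is correct, and its skeleton coincides with the paper's: reduce to testing against functions $\Phi(e_1,e_2)\expi{n\cdot\gamma}$ (the paper asserts this reduction, you justify it by tightness and a separating family), settle $n=0$ by Proposition \ref{thm:unidis}, and show that the twisted integrals vanish for $n\neq 0$ because the resonance $n_1e_{1,1}+n_2e_{2,1}\approx 0$ is asymptotically rare. The differences are in the execution of the $n\neq0$ step and are worth recording. The paper splits the integral into $I+II$ according to whether $|n_1e_{1,1}(N,\alpha)+n_2e_{2,1}(N,\alpha)|<N^{-1/2}$, bounds $II$ by the oscillatory estimate $\bigl|\int_0^1\expi{Nxc}\,dx\bigr|\leq(\pi N|c|)^{-1}\leq \pi^{-1}N^{-1/2}$ on the non-resonant set, and bounds $I$ by the measure of the resonant set. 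You instead observe that every vector of $L(N,\alpha)=g_{\ln N}\Lambda_\alpha\mathbb{Z}^2$ has first coordinate in $\frac{1}{N}\mathbb{Z}$, so that $NX_n\in\mathbb{Z}$ and the $x$-integral is \emph{exactly} the indicator of $\{X_n=0\}$; this removes the splitting and the oscillatory estimate altogether, at the price of using the special arithmetic of $L(N,\alpha)$ (the paper's estimate is more robust, yours is sharper here). Both arguments then rest on the same key fact, $\tilde{\mu}\{X_n=0\}=0$: the paper merely asserts it (in the form \emph{pour presque tout $L$, $e_{1,1}$ et $e_{2,1}$ sont indépendants sur $\mathbb{Z}$}), whereas you prove it by $SO(2)$-invariance of $\tilde{\mu}$ and Fubini --- a genuine gain in completeness. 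Likewise, the paper passes over the \emph{en particulier} clause in silence, while you supply the missing continuous-mapping argument, correctly identifying the discontinuity set $D=\bigcup_{m\neq0}\{X_m=0\}$ and that it is $\mu$-null by the same resonance fact.

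Two harmless repairs. First, with any fixed sign convention one only has $e_i(R_\theta L)=\pm R_\theta e_i(L)$, so the zero set in $\theta$ is a finite union, over the four sign choices, of zero sets of functions $\theta\mapsto\lVert w_{\pm\pm}\rVert\cos(\theta+\phi_{\pm\pm})$ with $w_{\pm\pm}=\pm n_1e_1\pm n_2e_2\neq 0$; it is still finite, so your Fubini argument stands. Second, $\Psi_N\to\Psi$ is \emph{not} uniform off $D$, because the coefficient $2b/X_m^{1-a}$ blows up as $X_m\to0^+$; the convergence is only locally uniform on the complement of $D$. That is, however, exactly what the extended continuous-mapping theorem (Billingsley's version) requires --- if $x_N\to x\notin D$ then $\Psi_N(x_N)\to\Psi(x)$ --- so your conclusion survives this one-word correction.
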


\begin{proof}
Il suffit de montrer que $\forall n_1,n_2$ entiers et $\forall \Phi$ à support compact, on a
\begin{eqnarray*}
 &&\limn\iint\Phi(e_1(N,\alpha),e_2(N,\alpha))\expi{(n_1\gamma_1+n_2\gamma_2)}dxd\alpha\\
 &=&\int_{\tilde{M}}\Phi(e_1(L),e_2(L))d\tilde{\mu}(L)\int_{\mathbb{T}^{2}}\expi{(n_1\gamma_1+n_2\gamma_2)}d\gamma.
\end{eqnarray*}
Si $n_1=n_2=0$, c'est un résultat direct de la Proposition \ref{thm:unidis}. Donc on peut supposer que $n_1\neq0$ ou $n_2\neq0$, et on doit prouver que
\begin{equation} \label{eq:dis}
\iint\Phi(e_1(N,\alpha),e_2(N,\alpha))\expi{(n_1\gamma_1+n_2\gamma_2)}dxd\alpha\rightarrow0 
\end{equation}
Comme pour presque tout $L$, $e_{1,1}$ et $e_{2,1}$ sont indépendants
sur $\mathbb{Z}$, la Proposition \ref{thm:unidis} nous dit que
\begin{equation*}
 \lambda(\alpha\in\mathbb{T}^{1}|\quad \lvert n_1e_{1,1}(N,\alpha)+n_2e_{2,1}(N,\alpha)\rvert<\frac{1}{N^{\frac{1}{2}}})\rightarrow0
\end{equation*}
lorsque $N\rightarrow\infty$. Donc LHS de \ref{eq:dis}$=I+II$ où $I$ est l'intégrale sur $\lvert n_1e_{1,1}+n_2e_{2,1}\rvert<\frac{1}{N^{\frac{1}{2}}}$
et $II$ est l'intégrale sur $\lvert n_1e_{1,1}+n_2e_{2,1}\rvert\geq\frac{1}{N^{\frac{1}{2}}}$. On a donc
\begin{eqnarray*}
 |I|&\leq &\lVert\Phi\rVert_{L^{\infty}}\lambda(\alpha\in\mathbb{T}^{1}|\quad \lvert n_1e_{1,1}(N,\alpha)+n_2e_{2,1}(N,\alpha)\rvert<\frac{1}{N^{\frac{1}{2}}})\rightarrow0.\\
 |II|&\leq&\frac{\lVert\Phi\rVert_{L^{\infty}}}{N^{\frac{1}{2}}}\rightarrow0.
\end{eqnarray*}

\end{proof}

\section{La fin de la démonstration}
\begin{lem}\label{lem:ex}
 La famille $\left\{\ep{D}\right\}_{\epsilon>0}$ sur $(M,\mu)$ convergent par rapport à la mesure vers une fonction
 $D$ mesurable et finie $\mu$-presque partout.
\end{lem}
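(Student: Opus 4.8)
The plan is to prove the statement by showing that the family $\left\{\ep{D}\right\}_{\epsilon>0}$ is Cauchy with respect to the measure $\mu$; since $\mu$ is a finite measure, a Cauchy-in-measure family has a $\mu$-a.e.\ finite measurable limit $D$, which is exactly what is claimed. First I would record the asymptotics of the coefficients. For fixed $m$, as $\epsilon\to0$ one has $\sin(2\pi X_m\epsilon)=2\pi X_m\epsilon+O(\epsilon^3)$ and $\cos(2\pi X_m\epsilon)-1=O(\epsilon^2)$, so the two correction factors satisfy
\[
\frac{2}{(1-a)\epsilon^a}\frac{\sin(2\pi X_m\epsilon)}{2\pi X_m}=\frac{2\epsilon^{1-a}}{1-a}+O(\epsilon^{3-a}),\qquad
\frac{2}{(1-a)\epsilon^a}\frac{\cos(2\pi X_m\epsilon)-1}{2\pi X_m}=O(\epsilon^{2-a}),
\]
both tending to $0$. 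Hence $\lgme{q}\to q_0(L,\gamma,m)$ pointwise, where $q_0$ is obtained from $q$ by deleting the two correction terms, and the natural candidate for the limit is $D=\sum_{m\neq 0,\ X_m>0}q_0(L,\gamma,m)$.

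Next I would split at a cut-off $M_0$ and write $\ep{D}=\ep{D}^{\le}+\ep{D}^{>}$, where $\ep{D}^{\le}$ gathers the indices with $\lVert m\rVert\le M_0$ and $\ep{D}^{>}$ the indices with $M_0<\lVert m\rVert\le M(\epsilon)$. The low part is a \emph{fixed finite} sum, so by the pointwise convergence of each coefficient, $\ep{D}^{\le}\to\sum_{0<\lVert m\rVert\le M_0,\ X_m>0}q_0$ as $\epsilon\to0$ (uniformly off the set where some $X_m$ is near $0$, which has small measure by Mahler compactness as in Proposition \ref{thm:unidis}). Thus for fixed $M_0$ the difference $\ep{D}^{\le}-\epp{D}^{\le}\to 0$ in measure as $\epsilon,\epsilon'\to0$, and the whole problem reduces to showing that the high part is uniformly small in measure:
\[
\lim_{M_0\to\infty}\ \sup_{0<\epsilon<\epsilon_0}\ \mu\left\{(L,\gamma)\in M:\ |\ep{D}^{>}(L,\gamma)|>\delta\right\}=0\qquad(\forall\,\delta>0).
\]

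For the tail I would first integrate in $\gamma$. The characters $\gamma\mapsto\cos(2\pi m\cdot\gamma+\pi Z_m),\ \sin(2\pi m\cdot\gamma+\pi Z_m)$ are orthogonal for distinct $m$, and through $\opid_{]0,\infty[}(X_m)$ each admissible frequency occurs once; using $|\sin(2\pi X_m\epsilon)|\le(2\pi X_m\epsilon)^a$ to absorb the corrections into the main size $X_m^{-(1-a)}$, one gets
\[
\int_{\mathbb{T}^2}|\ep{D}^{>}|^2\,d\gamma\ \le\ C\!\!\sum_{M_0<\lVert m\rVert}\frac{1}{X_m^{2-2a}}\,\frac{\sin^2(\pi Z_m)}{Z_m^2}\,\opid_{]0,\infty[}(X_m).
\]
Averaging over $L$ by the Siegel mean-value formula (and the norm equivalence between $\lVert m\rVert$ and $\lVert(X_m,Z_m)\rVert$ on the precompact family of lattices) turns the right-hand side into $C\int_{\{v_1>0,\ \lVert v\rVert>cM_0\}}v_1^{-(2-2a)}\tfrac{\sin^2(\pi v_2)}{\pi^2v_2^2}\,dv$. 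For $a<\tfrac12$ this integral converges and is $o(1)$ as $M_0\to\infty$, so $\ep{D}^{>}\to0$ in $L^2(M,\mu)$ uniformly in $\epsilon$, and Chebyshev closes the argument.

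The main obstacle is the regime $a\ge\tfrac12$, where the integral above diverges as $v_1\to\infty$: the second moment of $\ep{D}^{>}$ is unbounded, $D\notin L^2(M,\mu)$, and no $L^2$ limit exists. Here I would bound the tail in $L^1$ rather than $L^2$, exploiting that the lattice points with $|Z_m|$ bounded lie in a thin horizontal strip and hence have frequencies $m\in\mathbb{Z}^2$ concentrated near a one-dimensional arithmetic progression; grouping the tail into dyadic blocks $B_j$ over $2^j\le X_m<2^{j+1}$, the associated exponential sums concentrate like a Dirichlet kernel, so that $\|B_j\|_{L^1(\mu)}\ll\|B_j\|_{L^2(\mu)}$ and one finds $\|B_j\|_{L^1(\mu)}\lesssim j\,2^{-(1-a)j}$, which is summable since $a<1$. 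This gives $\|\ep{D}^{>}\|_{L^1(\mu)}\le\sum_{j\ge j_0}\|B_j\|_{L^1}\to0$ as $M_0\to\infty$, uniformly in the upper cut-off $M(\epsilon)$, hence uniformly in $\epsilon$, and Markov's inequality yields the required smallness in measure. The delicate points to carry out carefully are the uniformity in $\epsilon$ of this conditional $L^1$ estimate, the treatment of the $v_1\to0$ (short-vector) region on the small exceptional set, and the bookkeeping of the correction terms, which near $X_m\lesssim\epsilon^{-1}$ are genuinely of the same order $X_m^{-(1-a)}$ as the main terms and must be estimated with the elementary bounds already established above.
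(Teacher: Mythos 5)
Your overall frame (Cauchy in measure on the finite measure space $(M,\mu)$, then completeness of convergence in measure) matches the paper, but the way you try to establish the Cauchy property is genuinely different from the paper's, and it contains a gap at its central step. Your route requires showing that the tail $\sum_{M_0<\lVert m\rVert\leq M(\epsilon)}\lgme{q}$ is small in measure \emph{uniformly in} $\epsilon$; this is tantamount to proving convergence in measure of the infinite series (with the $\epsilon$-corrections removed) over all $m\neq 0$ — a statement strictly stronger than the lemma, and one the paper never establishes nor needs. Your own computation shows why this is hard: by orthogonality in $\gamma$ and a Siegel-type average over $L$, a dyadic block $B_j$ (indices with $X_m\sim 2^j$, $X_m>0$) satisfies
\begin{equation*}
\lVert B_j\rVert_{L^2(\mu)}^2\ \approx\ \sum_{X_m\sim 2^j}\frac{1}{X_m^{2-2a}}\,\frac{\sin^2(\pi Z_m)}{(\pi Z_m)^2}\ \approx\ 2^{j(2a-1)},
\end{equation*}
which diverges for $a>\frac12$ (and there is a separate divergence from the short-vector region $X_m\to 0$ even for $a<\frac12$). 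Your proposed rescue, $\lVert B_j\rVert_{L^1(\mu)}\lesssim j\,2^{-(1-a)j}$ via ``Dirichlet-kernel concentration'', is unjustified: the frequencies in a block are not an arithmetic progression but fill a genuinely two-dimensional strip (about $2^j$ lattice points spread over all heights $|Z_m|\lesssim T$), and the coefficients change sign through $\frac{\sin(\pi Z_m)}{\pi Z_m}$ and carry the extra phases $\pi Z_m$. The concentration that makes the classical Dirichlet kernel have small $L^1$ norm comes from all coefficients being equal and positive on a rank-one set of frequencies; nothing of the sort holds here, and passing from an $L^2$ norm growing like $2^{j(a-1/2)}$ to an $L^1$ norm decaying like $2^{-j(1-a)}$ is precisely the difficulty, not a step one can assert. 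So for $a\geq\frac12$ your argument does not close.

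The paper avoids any tail analysis on $(M,\mu)$. It uses the preceding proposition (joint convergence in distribution of $(e_1,e_2,\gamma)$, hence of the pair $(\axne{\Box},\axnep{\Box})$ to $(\ep{D},\epp{D})$) to write
\begin{equation*}
\mu\left\{|\ep{D}-\epp{D}|>\delta\right\}=\limn\lambda\left\{(\alpha,x)\in\mathbb{T}^2\;|\;|\axne{\Box}-\axnep{\Box}|>\delta\right\},
\end{equation*}
and then estimates the right-hand side \emph{at finite $N$}, where both $\axne{\Box}$ and $\axnep{\Box}$ are close to the same $\epsilon$-independent function $\sfn$: off the exceptional set $\nep{E}\cup\nepp{E}$ (of measure $\leq C(\epsilon+\epsilon')$) one has $\axne{\Box}=\axne{\bar{\diamondsuit}}$ with $\varlimsup_N\lVert\triangle-\bar{\diamondsuit}\rVert^2_{L^2(\torm{\nep{E}})}\leq C\epsilon$, while $\axne{\triangle}=\sfn$ off a set of measure $\epsilon$; the triangle inequality through $\axne{\triangle}$ and $\axnep{\triangle}$ plus Chebyshev then gives the bound $C(\epsilon+\epsilon')/\delta^2+C(\epsilon+\epsilon')$, valid for every $a\in]0,1[$. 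The key structure this exploits — and which your intrinsic approach on the space of lattices throws away — is that all the high-frequency cancellation has already been captured by the Section 3 estimates on the torus, which rest on the kernel $\frac{\sin(\pi Nk\alpha)}{\sin(\pi k\alpha)}$ and the measure of the resonant sets, not on lattice-point counting. Your treatment of the low part (fixed finite sum, pointwise convergence of the coefficients as $\epsilon\to 0$) is fine, but without the unproven concentration estimate the lemma cannot be obtained along your route.
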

\begin{proof}
$\forall \delta>0$ et $\forall \epsilon,\epsilon'>0$, on a
\begin{eqnarray*}
 &&\mu\left\{(L,r)\in M|\quad |\ep{D}(L,\gamma)-\epp{D}(L,\gamma)|>\delta\right\}\\
 &=&\limn \lambda\left\{(\alpha,x)\in\mathbb{T}^{2}|\quad|\axne{\Box}-\axnep{\Box}|>\delta\right\}\\
 &\leq&\limin\lambda\left\{(\alpha,x)\in\torm{\nep{E}-\nepp{E}}|\quad|\axne{\Box}-\axnep{\Box}|>\delta\right\}\\
 &&+\limin \lambda (\nep{E}\cup\nepp{E})\\
 &\leq&\limin\lambda\left\{(\alpha,x)\in\torm{\nep{E}-\nepp{E}}|\quad|\axne{\Box}-\axne{\triangle}|>\frac{\delta}{3}\right\}\\
 &&+\limin\lambda\left\{(\alpha,x)\in\torm{\nep{E}-\nepp{E}}|\quad|\axne{\triangle}-\axnep{\triangle}|>\frac{\delta}{3}\right\}\\
 &&+\limin\lambda\left\{(\alpha,x)\in\torm{\nep{E}-\nepp{E}}|\quad|\axnep{\Box}-\axnep{\triangle}|>\frac{\delta}{3}\right\}\\
 &&+C(\epsilon+\epsilon')\\
 &\leq& C\frac{\epsilon+\epsilon'}{\delta^2}+C(\epsilon+\epsilon')<\delta,\quad \text{si} \quad \epsilon+\epsilon'<\frac{\delta^3}{C(1+\delta^2)},
\end{eqnarray*}
d'où la famille $\left\{\ep{D}\right\}_{\epsilon>0}$ est Cauchy par rapport à la mesure. Donc elle converge vers une fonction $D$ sur $(M,\mu)$.

Comme les $\ep{D}$ sont finies presque partout, $D$ l'est aussi.
%Pour montrer que $D\in L^1(M,\mu)$, il suffit de montrer que $\left\{\ep{D}\right\}_{\epsilon>0}$ sont uniformément bornées dans $L^1(M,\mu)$.
%En fait, 
%\begin{eqnarray*}
 %\lVert\ep{D}\rVert_{L^1(M,\mu)}&=& \int_{0}^{\infty}\mu\left\{\ep{D}>t\right\}dt\\
 %&=&\int_{0}^{\infty}\limn\lambda\left\{\nep{\Box}>t\right\}dt\\
 %&\leq&\limin\int_{0}^{\infty}\lambda\left\{\nep{\Box}>t\right\}dt\\
% &=&\limin\lVert\nep{\Box}\rVert_{L^1(\mathbb{T}^{2})}.
%\end{eqnarray*}

\end{proof}

\begin{prop}
La fonction $D$ obtenue dans lemme \ref{lem:ex} vérifie les conditions dans théorème \ref{thm:principal}.
\end{prop}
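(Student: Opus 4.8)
Mon plan est de reconnaître cet énoncé comme une application du théorème classique d'approximation en distribution (voir Billingsley, \emph{Convergence of Probability Measures}, Théorème 3.2): pour montrer qu'une suite $X_N$ converge en distribution vers $W$, il suffit de produire, pour chaque $\epsilon>0$, des variables $Y_N^{\epsilon}$ telles que (i) $Y_N^{\epsilon}$ converge en distribution vers une loi $W_{\epsilon}$ lorsque $N\to\infty$, (ii) $W_{\epsilon}$ converge en distribution vers $W$ lorsque $\epsilon\to0$, et (iii) $\lim_{\epsilon\to0}\limsn\lambda\{|X_N-Y_N^{\epsilon}|>\delta\}=0$ pour tout $\delta>0$. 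Je l'appliquerais avec $X_N=\sfn$ et $Y_N^{\epsilon}=\axne{\Box}$, vues comme variables aléatoires sur $(\mathbb{T}^{2},\lambda)$, et avec $W_{\epsilon}=\ep{D}$, $W=D$ sur $(M,\mu)$.

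Les deux premières conditions sont déjà en main. La condition (i) est précisément la dernière Proposition, qui affirme que la loi de $\axne{\Box}$ converge vers celle de $\ep{D}$. Pour (ii), j'invoquerais le Lemme \ref{lem:ex}: $\ep{D}$ converge vers $D$ par rapport à la mesure $\mu$, et la convergence en mesure entraîne la convergence en distribution.

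L'essentiel du travail portera donc sur (iii), qu'il s'agit d'assembler à partir des estimations des sections 3 et 4. Fixons $\delta>0$. Sur $\torm{\nep{E}}$ on dispose de l'égalité $\axne{\bar{\diamondsuit}}=\axne{\Box}$, tandis que $X_N=\axne{\triangle}$ en dehors d'un ensemble de mesure au plus $\epsilon$; l'inégalité triangulaire donnerait alors
\begin{equation*}
\left\{|X_N-Y_N^{\epsilon}|>\delta\right\}\cap\torm{\nep{E}}\subseteq\left\{|X_N-\axne{\triangle}|>0\right\}\cup\left\{|\axne{\triangle}-\axne{\bar{\diamondsuit}}|>\delta\right\}.
\end{equation*}
Le premier ensemble est de mesure $\leq\epsilon$; l'inégalité de Chebyshev appliquée à la borne $\limsn\lVert\axne{\triangle}-\axne{\bar{\diamondsuit}}\rVert_{L^2(\torm{\nep{E}})}^2\leq C\epsilon$ majorerait le second, à la limite supérieure en $N$, par $C\epsilon/\delta^2$. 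En ajoutant la contribution de $\nep{E}$ lui-même, de mesure $\leq C\epsilon$, j'obtiendrais
\begin{equation*}
\limsn\lambda\left\{|X_N-Y_N^{\epsilon}|>\delta\right\}\leq C\left(1+\delta^{-2}\right)\epsilon,
\end{equation*}
qui tend vers $0$ lorsque $\epsilon\to0$ à $\delta$ fixé, d'où (iii).

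Le théorème d'approximation donnerait alors la convergence en distribution de $\sfn$ vers $D$, soit $\limn\lambda\{\sfn\leq z\}=\mu\{D\leq z\}$ en tout point de continuité $z$ de la fonction de répartition de $D$; cet ensemble étant le complémentaire d'un ensemble dénombrable, l'égalité vaudrait pour Lebesgue-presque tout $z$, ce qui est exactement \eqref{eq:resultat}. Le point le plus délicat sera (iii), et surtout le respect de l'ordre des limites — d'abord $N\to\infty$ en limite supérieure, puis $\epsilon\to0$ —, les estimations quantitatives elles-mêmes ayant toutes été préparées en amont.
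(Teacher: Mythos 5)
Your proposal is correct, and in substance it coincides with the paper's own proof: you rely on exactly the same ingredients — $\sfn=\axne{\triangle}$ outside a set of measure at most $\epsilon$, the identity $\axne{\Box}=\axne{\bar{\diamondsuit}}$ on $\torm{\nep{E}}$ together with $|\nep{E}|\leq C\epsilon$, Chebyshev applied to the bound $\varlimsup_{N}\lVert\axne{\triangle}-\axne{\bar{\diamondsuit}}\rVert_{L^2(\torm{\nep{E}})}^2\leq C\epsilon$, the distributional convergence of $\axne{\Box}$ to $\ep{D}$, and the convergence in measure of $\ep{D}$ to $D$ from Lemme \ref{lem:ex}. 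The only genuine difference is organizational: you delegate the two-parameter limit bookkeeping to Billingsley's converging-together theorem, whereas the paper carries out that argument by hand, proving $\limsup_{N}\lambda\{\sfn\leq z\}\leq\mu\{D\leq z+2\delta\}$ by inserting the intermediate events $\{\axne{\triangle}\leq z\}$ and $\{\axne{\Box}\leq z+\delta\}$ with Chebyshev error terms, letting $\epsilon\rightarrow0$ then $\delta\rightarrow0$, obtaining symmetrically $\liminf_{N}\lambda\{\sfn\leq z\}\geq\mu\{D<z\}$, and concluding at every $z$ with $\mu\{D=z\}=0$, i.e. for Lebesgue-almost every $z$ — which is precisely the proof of the theorem you cite, specialized to this setting. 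Nothing is lost either way: your version is shorter to state and makes the probabilistic structure explicit, while the paper's is self-contained; the one point worth making explicit in your write-up is that $\mu$ is a probability measure (so that convergence in measure of $\ep{D}$ to $D$ indeed implies convergence in distribution) and that $\sfn$ and $\axne{\Box}$ live on the common space $(\mathbb{T}^{2},\lambda)$, as the converging-together theorem requires.
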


\begin{proof}
$\forall \epsilon>0$, $\forall \delta>0$, on a
\begin{eqnarray*}
 &&\lambda\left\{(\alpha,x)\in \mathbb{T}^{2}|\quad\sfn\leq z\right\}\\
 &\leq&\lambda\left\{(\alpha,x)\in \mathbb{T}^{2}|\quad\axne{\triangle}\leq z\right\}+C\epsilon\\
 &\leq&\lambda\left\{(\alpha,x)\in \torm{\nep{E}}|\quad\axne{\triangle}\leq z\right\}+C\epsilon\\
 &\leq&\lambda\left\{(\alpha,x)\in \torm{\nep{E}}|\quad\axne{\Box}\leq z+\delta\right\}\\
 &&+\lambda\left\{(\alpha,x)\in \torm{\nep{E}}|\quad|\axne{\triangle}-\axne{\Box}|\geq \delta\right\}+C\epsilon\\
 &\leq&\lambda\left\{(\alpha,x)\in \mathbb{T}^{2}|\quad\axne{\Box}\leq z+\delta\right\}\\
 &&+\frac{\lVert\Box-\triangle\rVert_{L^2(\torm{\nep{E}})}^2}{\delta^2}+C\epsilon.
\end{eqnarray*}
Donc,
\begin{eqnarray*}
 &&\limsn\lambda\left\{(\alpha,x)\in \mathbb{T}^{2}|\quad\sfn\leq z\right\}\\
 &\leq&\mu\left\{(L,\gamma)\in M|\quad \ep{D}(L,\gamma)\leq z+\delta\right\}+\frac{C\epsilon}{\delta^2}+C\epsilon\\
 &\leq&\mu\left\{(L,\gamma)\in M|\quad D(L,\gamma)\leq z+2\delta\right\}\\
 &&+\mu\left\{(L,\gamma)\in M|\quad |\ep{D}(L,\gamma)-D(L,\gamma)|\geq \delta\right\}+\frac{C\epsilon}{\delta^2}+C\epsilon\\
 &&\longrightarrow^{\epsilon\rightarrow 0}\mu\left\{(L,\gamma)\in M|\quad D(L,\gamma)\leq z+2\delta\right\}\\
 &&\longrightarrow^{\delta\rightarrow 0}\mu\left\{(L,\gamma)\in M|\quad D(L,\gamma)\leq z\right\}
\end{eqnarray*}

De même façon, on déduit que
\begin{equation*}
 \limin\lambda\left\{(\alpha,x)\in \mathbb{T}^{2}|\quad\sfn\leq z\right\}\geq\mu\left\{(L,\gamma)\in M|\quad D(L,\gamma)< z\right\}.
\end{equation*}

\end{proof}

\def\refname{References}

\end{document}